\begin{document}

\def\A{\mathbb{A}}
\def\B{\mathbf{B}}
\def \C{\mathbb{C}}
\def \F{\mathbb{F}}
\def \K{\mathbb{K}}

\def \Z{\mathbb{Z}}
\def \P{\mathbb{P}}
\def \R{\mathbb{R}}
\def \Q{\mathbb{Q}}
\def \N{\mathbb{N}}
\def \Z{\mathbb{Z}}

\def\B{\mathcal B}
\def\e{\varepsilon}

\def\cA{{\mathcal A}}
\def\cB{{\mathcal B}}
\def\cC{{\mathcal C}}
\def\cD{{\mathcal D}}
\def\cE{{\mathcal E}}
\def\cF{{\mathcal F}}
\def\cG{{\mathcal G}}
\def\cH{{\mathcal H}}
\def\cI{{\mathcal I}}
\def\cJ{{\mathcal J}}
\def\cK{{\mathcal K}}
\def\cL{{\mathcal L}}
\def\cM{{\mathcal M}}
\def\cN{{\mathcal N}}
\def\cO{{\mathcal O}}
\def\cP{{\mathcal P}}
\def\cQ{{\mathcal Q}}
\def\cR{{\mathcal R}}
\def\cS{{\mathcal S}}
\def\cT{{\mathcal T}}
\def\cU{{\mathcal U}}
\def\cV{{\mathcal V}}
\def\cW{{\mathcal W}}
\def\cX{{\mathcal X}}
\def\cY{{\mathcal Y}}
\def\cZ{{\mathcal Z}}

\def\f{\frac{|\A||B|}{|G|}}
\def\AB{|\A\cap B|}
\def \Fq{\F_q}
\def \Fqn{\F_{q^n}}

\def\({\left(}
\def\){\right)}
\def\fl#1{\left\lfloor#1\right\rfloor}
\def\rf#1{\left\lceil#1\right\rceil}
\def\Res{{\mathrm{Res}}}

\newcommand{\comm}[1]{\marginpar{
\vskip-\baselineskip \raggedright\footnotesize
\itshape\hrule\smallskip#1\par\smallskip\hrule}}

\newtheorem{lem}{Lemma}
\newtheorem{lemma}[lem]{Lemma}
\newtheorem{prop}{Proposition}
\newtheorem{proposition}[prop]{Proposition }
\newtheorem{thm}{Theorem}
\newtheorem{theorem}[thm]{Theorem}
\newtheorem{cor}{Corollary}
\newtheorem{corollary}[cor]{Corollary}
\newtheorem{prob}{Problem}
\newtheorem{problem}[prob]{Problem}
\newtheorem{ques}{Question}
\newtheorem{question}[ques]{Question}
\newtheorem{rem}{Remark}

\title{Double exponential sums and congruences with intervals and exponential functions modulo a prime}

\author{
{\sc M.~Z.~Garaev} }

\date{}

\maketitle

\begin{abstract}
Let $p$ be a large prime number and $g$ be any integer of multiplicative order $T$ modulo $p$. We obtain a new estimate of the double
exponential sum
$$
S=\sum_{n\in \mathcal{N}}\left|\sum_{m\in \mathcal{M} }e_p(an g^{m})\right|, \quad \gcd (a,p)=1,
$$
where $\mathcal{N}$ and $\mathcal{M}$ are intervals of consecutive integers with $|\mathcal{N}|=N$ and $|\mathcal{M}|=M<T$ elements. 
One representative
example is the following consequence of the main result:  if $N=M\approx p^{1/3}$, then  $|S|< N^{2-1/8 + o(1)}$.
We then apply our  estimate  to obtain new results on  additive congruences involving intervals and exponential functions.
\end{abstract}

\paragraph{Mathematical Subject Classification:}  11L07, 11L79

\paragraph{Keywords:} exponential sums, exponential functions, congruences

\section{Introduction}

Let $p$ be a large prime number, $g$ be an integer with $\gcd(g,p)=1$. Denote by $T$ the multiplicative order
of $g$ modulo $p.$ Let
$$
\cN = \{u+1,\ldots, u+N\}\quad {\rm and} \quad \cM = \{v+1,\ldots, v+M\}
$$
be two intervals of consecutive integers with
$$
|\cN|=N\le p\quad  {\rm and}\quad  |\cM|=M\le T.
$$
In the present paper we are concerned with the problem of upper bound estimates for the double exponential sum
$$
S_{a,p,g}(\bar{\alpha},\bar{\beta}; \cN,\cM)=\sum_{n\in \cN}\sum_{m\in \cM}\alpha_n\beta_m e_p(ang^m),\quad \gcd(a,p)=1,
$$
where $\alpha_n$ and $\beta_m$ are complex coefficients with $|\alpha_n|, \beta_m|\le 1$, and  $e_p(z)=e^{2\pi i z/p}$. Here,
for a negative integer $-k$, the number $g^{-k}$ is defined to be an integer with $g^{-k}g^k\equiv 1 \!\!\! \pmod  p.$

In the special case $\alpha_n=\beta_m=1$, the sum $S_{a,p}(\bar{\alpha},\bar{\beta}; \cN,\cM)$ has appeared in the work of Bourgain~\cite{Bourg1},
where he has estimated it for very short intervals $\cN$ and $\cM.$

When  $M>p^{1/2}$, one can apply classical estimates of single sums with exponential functions
which would lead to nontrivial
bounds for $S_{a,p}(\bar{\alpha},\bar{\beta}; \cN,\cM)$ with reasonably good power savings. On the other hand,
from the celebrated work of Bourgain, Glibichuk and Konyagin~\cite{BGK}, it follows that for any $\varepsilon>0$ there exists $\delta>0$
such that if $N>p^{\varepsilon}, M>p^{\varepsilon}$, then
$$
\Bigl|S_{a,p,g}(\bar{\alpha},\bar{\beta}; \cN,\cM)\Bigr|< NMp^{-\delta}.
$$
However, the value  of $\delta$ in terms of $\varepsilon$ is very small, and, in particular, it does not give good savings
for medium sized intervals.

Recently,  Shparlinski and Yau~\cite{Shp1} obtained a number of new explicit estimates on $S_{a,p}(\bar{\alpha},\bar{\beta}; \cN,\cM)$.
One of the features of~\cite{Shp1} is that  some of the estimates given there work well for certain ranges of $N$ and $M$ below the critical value $p^{1/2}$.
They also noted, that the work of Roche-Newton,  Rudnev and Shkredov~\cite{RRShk} leads to certain nontrivial bounds in the
range $M>p^{1/3+c}$, for any positive constant $c$. Nevertheless,  in some very interesting cases  (for example, if $N, M<p^{1/3}$) these estimates  become trivial
and one naturally asks what can be done in these cases. 

Note that by the Cauchy-Schwarz inequality,
\begin{equation*}
\begin{split}
|S_{a,p,g}(\bar{\alpha},\bar{\beta}; \cN,\cM)|^2\le M \sum_{m\in\cM}\left|\sum_{n\in \cN}\alpha_n e_p(ang^m)\right|^2 \\ \le M \sum_{n_1\in \cN}\sum_{n_2\in \cN}\left|\sum_{m\in  \cM}e_p(a(n_1-n_2)g^m)\right|\\ \ll MN \sum_{0\le n\le N}\left|\sum_{m\in  \cM}e_p(ang^m)\right|.
\end{split}
\end{equation*}
Thus, in what follows we shall concentrate our attention on the sum 
$$
S_{a,p,g}(\cN,\cM)=\sum_{n\in \cN}\left|\sum_{m\in  \cM}e_p(ang^m)\right|.
$$
In the present paper we obtain  a new explicit estimate for $S_{a,p}(\cN,\cM)$ 
which, in particular, is nontrivial in the range $N=M>p^{2/7+c},$ for any constant $c>0$. Then we apply our bound to obtain new results on congruences
involving intervals and exponential functions.

\bigskip

{\bf Notations.}   In what follows, we use the notation $A\lesssim B$ to mean that
$|A|<B p^{o(1)}$, or equivalently,  for any $\varepsilon>0$ there
is a constant $c=c(\varepsilon)$ such that
$|A|<c Bp^{\varepsilon}.$  Given two sets $\cX$ and $\cY$ their
product-set $\cX\cdot\cY$ and the sum-set $\cX+\cY$ are defined by
$$
\cX\cdot\cY=\{ab;\quad a\in \cX,\, b\in \cY\},\quad \cX+\cY=\{a+b;\quad a\in \cX,\, b\in \cY\}.
$$
As usual, for a positive integer $k$, the $k$-fold sum-set $k\cX$ is defined by
$$
k\cX = \{a_1+\ldots+a_k;\quad a_k\in \cX\}.
$$ 
The notation $|\cX|$ stands for the cardinality of the set $\cX$.

\section{Our results}

\begin{theorem} 
\label{thm:Main} Let $M<p^{2/3}$. Then
$$
S_{a,p,g}(\cN,\cM)=\sum_{n\in \cN}\left|\sum_{m\in  \cM}e_p(ang^m)\right|= NM\Delta,
$$
where
$$
\Delta \lesssim \frac{1}{M^{3/8}} +
 \Bigl(\frac{p}{NM^{5/2}}\Bigr)^{1/4} + \Bigl(\frac{p}{N^{4/3}M^{7/3}}\Bigr)^{3/16}+\Bigl(\frac{p}{N^{2}M^{3/2}}\Bigr)^{1/4}.
$$
\end{theorem}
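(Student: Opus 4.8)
The plan is to work throughout with the inner sum $W_n=\sum_{m\in\cM}e_p(ang^m)$, so that $S:=S_{a,p,g}(\cN,\cM)=\sum_{n\in\cN}|W_n|$, and to exploit two structural features of the set $\cH=\{g^m:m\in\cM\}$: its approximate invariance under multiplication by $g$, and the near-minimality of its additive energy. For the first feature, replacing $\cM$ by $\cM+1$ alters $\cH$ only in its two endpoints, so $|W_{gn}-W_n|\le 2$ and, on iterating, $|W_{g^kn}-W_n|\le 2|k|$ for every $k$. Hence, for a parameter $K$ to be chosen,
\begin{equation*}
S\le \frac1K\sum_{n\in\cN}\sum_{k=1}^{K}\bigl|W_{g^kn}\bigr|+2NK,
\end{equation*}
which replaces the interval $\cN$ by the thicker set $\cN\cdot\{g,\ldots,g^{K}\}$ at the cost of the error $O(NK)$.

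First I would extract the leading term by H\"older's inequality in the fourth moment,
\begin{equation*}
S^4\le N^{3}\sum_{n\in\cN}\bigl|W_n\bigr|^{4}=N^{3}\sum_{\lambda}J(\lambda)\sum_{n\in\cN}e_p(an\lambda),
\end{equation*}
where $J(\lambda)$ counts the solutions of $g^{m_1}+g^{m_2}-g^{m_3}-g^{m_4}\equiv\lambda$ with $m_i\in\cM$. The diagonal $\lambda\equiv0$ contributes $N\cdot J(0)=N\cdot\mathsf E(\cH)$, where $\mathsf E(\cH)$ is the additive energy of $\cH$. The key input is the bound $\mathsf E(\cH)\lesssim M^{5/2}$, which holds precisely when $M<p^{2/3}$ (this is exactly where the hypothesis is used, since the random term $M^4/p$ overtakes $M^{5/2}$ only for $M>p^{2/3}$) and follows from the sum-product/incidence machinery for geometric progressions. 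This alone yields $S\lesssim NM^{5/8}$, i.e. the first term $\Delta\lesssim M^{-3/8}$.

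The remaining three terms come from the off-diagonal frequencies $\lambda\neq0$. Here I would combine the amplified inequality with a second application of Cauchy--Schwarz in the frequency variable, so that the weighted frequency sum $\sum_{\lambda\ne0}J(\lambda)\sum_{n}e_p(an\lambda)$ is recast as a count of solutions of the mixed additive--multiplicative equation
\begin{equation*}
n_1\bigl(g^{m_1}-g^{m_2}\bigr)\equiv n_2\bigl(g^{m_3}-g^{m_4}\bigr)\pmod p,\qquad n_i\in\cN\cdot\{g,\ldots,g^{K}\},\ \ m_i\in\cM.
\end{equation*}
Such a count is exactly of the type handled by the point-plane incidence bound of Rudnev as used in~\cite{RRShk}: one separates the degenerate solutions (which reproduce the energy term) from the generic ones, the latter bounded by the incidence estimate and responsible for the factors of $p$ together with the $1/3$-powers of $N$ and $M$. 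Optimizing over $K$ and over the H\"older exponent, and balancing the amplification error $NK$ against the three resulting contributions, produces the three $p$-dependent terms of $\Delta$.

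The hard part is this mixed-energy estimate: one must apply the incidence bound to the product set $\cN\cdot\{g,\ldots,g^{K}\}$ against the difference set of the geometric progression, correctly isolating the degenerate solutions and tracking how the threshold $M<p^{2/3}$ propagates, since it is the precise counting of these solutions---rather than the fourth-moment main term---that fixes the exponents $5/2$, $7/3$, $4/3$ and $3/16$ appearing in the statement.
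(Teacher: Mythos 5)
You have correctly identified the paper's two essential inputs: the Roche--Newton--Rudnev--Shkredov energy bound $E_{+}(g^{\cM})\ll M^{5/2}$ for $M<p^{2/3}$ (Lemma~\ref{lem:RRSh}), and a Cilleruelo--Garaev-type bound for the multiplicative equation between an interval and a geometric progression (Lemmas~\ref{lem:CG+G} and~\ref{lem1:CG+G}); the overall ``amplify, then Cauchy--Schwarz twice'' strategy is also the paper's. But your amplification device is too lossy to prove the stated theorem. You pass from $\cN$ to $\cN\cdot\{g,\ldots,g^{K}\}$ via $|W_{g^kn}-W_n|\le 2k$, which carries an additive error $O(NK)$. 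Running your scheme with the paper's lemmas (Cauchy--Schwarz, then Cauchy--Schwarz in the frequency $\lambda$ with weights $I_\lambda$, then $E_{+}(g^{\cM})\lesssim M^{5/2}$ and $\#\{ng^{k}\equiv n_1g^{k_1}\}\lesssim K^2+KN+K^2N^2/p+K^{7/4}N p^{-1/4}$) gives
$$
S\lesssim NM^{5/8}+N^{1/2}M^{5/8}p^{1/4}+N^{3/4}M^{5/8}p^{1/4}K^{-1/4}+N^{3/4}M^{5/8}p^{3/16}K^{-1/16}+NK,
$$
and the two $K$-dependent terms reduce to the theorem's terms $N^{3/4}M^{3/8}p^{1/4}$ and $N^{3/4}M^{9/16}p^{3/16}$ only when $K\ge M$, at which point the error $NK\ge NM$ is trivial. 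Optimizing $K$ does not repair this: in the representative case $N=M\approx p^{1/3}$, balancing $N^{3/4}M^{5/8}p^{1/4}K^{-1/4}$ against $NK$ forces $K\approx p^{3/10}$ and yields $S\lesssim p^{19/30}=N^{1.9}$, strictly weaker than the claimed $NM\Delta\approx p^{5/8}=N^{2-1/8}$; no choice of $K$ beats $p^{19/30}$, so the method misses the theorem by a factor $p^{1/120}$. The paper's Friedlander--Iwaniec smoothing exists precisely to eliminate this loss: since the trapezoidal weight $f$ coincides with the indicator of $\cM$ at the integers, the shift identity
$$
\sum_{m\in\Z}f(m)e_p(ang^{m})=\frac{1}{M}\sum_{k=1}^{M}\sum_{m=v-M}^{v+M-1}f(k+m)e_p(ang^{k}g^{m})
$$
is \emph{exact}, so the shift parameter runs over the full range $[1,M]$ at the cost of only a $\log M$ factor (from $\int|F(y)|\,dy$) and harmless unimodular weights $\delta_m$, with no additive error term. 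This exact averaging is exactly the ingredient your sketch discards, and it is what fixes the exponents in the statement.

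There is also a structural problem in your treatment of the off-diagonal frequencies. You take the fourth moment of the \emph{unamplified} sum, $S^4\le N^3\sum_\lambda J(\lambda)\sum_{n\in\cN}e_p(an\lambda)$, and then propose to ``combine the amplified inequality'' with Cauchy--Schwarz so that the off-diagonal part becomes the count of $n_1(g^{m_1}-g^{m_2})\equiv n_2(g^{m_3}-g^{m_4})\pmod p$ with $n_i\in\cN\cdot\{g,\ldots,g^{K}\}$. This does not parse: once the fourth moment has been taken, the inner sum runs over the original interval $\cN$ and the amplification can no longer be inserted; moreover, for a \emph{fixed} $a$, Cauchy--Schwarz in $\lambda$ followed by orthogonality produces the decoupled product $\bigl(\sum_\lambda I_\lambda^2\bigr)\cdot p\,\#\{n_1g^{k_1}\equiv n_2g^{k_2}\}$ --- which is the paper's route, handled by Lemma~\ref{lem1:CG+G} --- and not your coupled equation, which arises only after averaging over all residues $a$. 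Both defects are repairable by reorganizing the argument into the paper's order (amplify exactly via smoothing, Cauchy--Schwarz, then Cauchy--Schwarz against $I_\lambda$), but as written the proposal loses a genuine power of $p$ and relies on an identity that is not available for fixed $a$.
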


In particular,  if $N=M\approx p^{1/3}$, then $S_{a,p,g}(\cN,\cM)\lesssim N^{2-1/8}.$

It is well-known that nontrivial exponential sum estimates is a basic tool in investigation of additive problems.  Theorem~\ref{thm:Main}
has the following application.

\begin{theorem} 
\label{thm:10 sums} Let $\varepsilon>0$ be a fixed small positive constant and let $\cN_i$ and $\cM_i$  be intervals of consecutive integers with 
$|\cN_i|=N_i$ and $|\cM_i|=M_i$, satisfying
$$
p \ge N_i>p^{1/3+\varepsilon}, \quad T\ge M_i>p^{1/3+\varepsilon},\quad , i=1,2,\ldots, 10.
$$
Then for any integer $\lambda$ the number $J_{10}$ of solutions of the congruence
$$
x_1g^{y_1}+\ldots +x_{10}g^{y_{10}}\equiv \lambda \!\!\! \pmod  p, \quad x_i\in\cN_i, \, y_i\in\cM_i, 
$$
satisfies 
$$
J_{10}=\frac{\prod_{i=1}^{10}(N_iM_i)}{p}\Bigl(1+O(p^{-\delta})\bigr),\qquad \delta=\delta(\varepsilon)>0.
$$
\end{theorem}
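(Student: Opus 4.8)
The plan is to use the circle method (orthogonality of additive characters modulo $p$) together with Theorem~\ref{thm:Main}. Writing $W_i(r)=\sum_{x\in\cN_i}\sum_{y\in\cM_i}e_p(rxg^y)$, orthogonality gives
$$
J_{10}=\frac1p\sum_{r=0}^{p-1}e_p(-r\lambda)\prod_{i=1}^{10}W_i(r).
$$
The term $r=0$ produces $W_i(0)=N_iM_i$, hence the main term $\prod_{i=1}^{10}(N_iM_i)/p$. Everything therefore reduces to showing that the remaining contribution $\frac1p\sum_{r=1}^{p-1}\prod_{i=1}^{10}|W_i(r)|$ is smaller than the main term by a factor $p^{-\delta}$.

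For the pointwise control of each factor I would invoke Theorem~\ref{thm:Main}: since $|W_i(r)|\le\sum_{x\in\cN_i}\bigl|\sum_{y\in\cM_i}e_p(rxg^y)\bigr|=S_{r,p,g}(\cN_i,\cM_i)$ and $\gcd(r,p)=1$ for $1\le r\le p-1$, each of the four terms in the bound for $\Delta$ is a negative power of $p$ once $N_i,M_i>p^{1/3+\varepsilon}$ (for $M_i\ge p^{2/3}$, where Theorem~\ref{thm:Main} does not apply, one uses instead the classical estimates for single sums with $g^y$ over a long interval). A short computation at the corner $N_i=M_i=p^{1/3+\varepsilon}$ shows that the smallest of the four savings is $p^{-1/24-c\varepsilon}$, so uniformly in $1\le r\le p-1$ one has $|W_i(r)|\lesssim N_iM_i\,p^{-\delta_1}$ with $\delta_1=\delta_1(\varepsilon)$ satisfying $8\delta_1>1/3$.

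Then I would split the ten factors as $8+2$: bound eight of them, say $i=3,\dots,10$, by the pointwise estimate above, and keep $W_1,W_2$ for an $L^2$ average. By Cauchy--Schwarz,
$$
\frac1p\sum_{r=1}^{p-1}\prod_{i=1}^{10}|W_i(r)|\lesssim\frac{p^{-8\delta_1}}{p}\Bigl(\prod_{i=3}^{10}N_iM_i\Bigr)\Bigl(\sum_{r=1}^{p-1}|W_1(r)|^2\Bigr)^{1/2}\Bigl(\sum_{r=1}^{p-1}|W_2(r)|^2\Bigr)^{1/2}.
$$
Opening the square and using orthogonality once more,
$$
\sum_{r=0}^{p-1}|W_i(r)|^2=p\cdot\#\{(x_1,y_1,x_2,y_2)\in\cN_i^2\times\cM_i^2:\ x_1g^{y_1}\equiv x_2g^{y_2}\pmod p\},
$$
the multiplicative energy of $\cN_i$ with $G_i=\{g^y:y\in\cM_i\}$.

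The \textbf{main obstacle} is to prove that this energy is essentially of diagonal size, namely $\lesssim N_iM_i+(N_iM_i)^2/p$. Indeed, with this bound the two $L^2$-factors contribute $\lesssim(pN_iM_i+(N_iM_i)^2)^{1/2}$, and inserting everything (in the tightest balanced case $N_i=M_i=p^{1/3+\varepsilon}$) the error becomes $\lesssim p^{1/3-8\delta_1}$ times the main term; since $8\delta_1>1/3$ this is a genuine power saving $p^{-\delta}$, which also explains the choice of exactly ten summands, as $10-2=8$ pointwise factors are precisely what is needed to beat the threshold $p^{1/3}$. To establish the energy bound I would write it as $\sum_{|k|<M_i}(M_i-|k|)\,\#\{(x_1,x_2)\in\cN_i^2:x_1\equiv g^kx_2\pmod p\}$, separate the diagonal $k=0$ (which contributes exactly $N_iM_i$), and control the off-diagonal terms by expanding the inner count in additive characters and exploiting the equidistribution of the powers $g^k$ in short intervals. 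This is exactly the point where genuine cancellation—rather than the triangle inequality—is indispensable, since the trivial bound $N_iM_i\min(N_i,M_i)$ is far too weak to close the argument.
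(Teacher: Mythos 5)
Your skeleton coincides with the paper's own proof: orthogonality of additive characters, the uniform pointwise bound $|W_i(r)|\lesssim N_iM_i\,p^{-1/24-c\varepsilon}$ for $\gcd(r,p)=1$ (Theorem~\ref{thm:Main} when $M_i<p^{2/3}$, and classical single-sum estimates giving $\lesssim N_iM_ip^{-1/6}$ when $M_i\ge p^{2/3}$) applied to eight of the ten factors, Cauchy--Schwarz on the remaining two, and the identification of the resulting $L^2$ quantity with the multiplicative energy $E_i=\#\{(x_1,y_1,x_2,y_2):x_1g^{y_1}\equiv x_2g^{y_2}\ (\mathrm{mod}\ p)\}$. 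The bookkeeping, including $8\delta_1>1/3$ and the resulting error-to-main ratio $p^{1/3-8\delta_1-c\varepsilon}$, is also exactly the paper's.

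The genuine gap is the energy bound, which you yourself flag as the main obstacle but do not prove. There are two problems. First, the bound you say you need, $E_i\lesssim N_iM_i+(N_iM_i)^2/p$, is stronger than anything currently available: the paper's Lemma~\ref{lem1:CG+G} gives $E_i\lesssim M_i^2+M_iN_i+M_i^2N_i^2/p+M_i^{7/4}N_i/p^{1/4}$, and the last term dominates the diagonal term $M_iN_i$ precisely in the range $M_i>p^{1/3}$ relevant here, so a purely diagonal bound at this scale is not known. Second, and more seriously, your proposed route cannot deliver it: after writing $E_i=\sum_{|k|<M_i}(M_i-|k|)\,\#\{x_1\equiv g^kx_2\}$ and expanding in additive characters, the required cancellation amounts to controlling how often $g^k$, $|k|<M_i$, lands in (dilates of) short intervals --- i.e.\ to equidistribution of consecutive powers of $g$ at scale $p^{1/3}$, which is an open problem of at least the same depth as the statement being proved; the off-diagonal count $\sum_{k\neq 0}\#\{x_1\equiv g^kx_2\}$ \emph{is} the quantity Lemma~\ref{lem1:CG+G} bounds, so the step is circular. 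The fix is to quote Lemma~\ref{lem1:CG+G} (which the paper deduces from Lemma~\ref{lem:CG+G}, the Cilleruelo--Garaev/Garaev energy bound for an interval times a set $\cU$ of small multiplicative doubling, applied to $\cU=\{g^y:y\in\cM_i\}$, noting $|\cU\cdot\cU|<2|\cU|$ since $M_i\le T$). In the range $N_i,M_i>p^{1/3+\varepsilon}$ that lemma gives $E_i\lesssim N_i^2M_i^2\,p^{-2/3-c\varepsilon}$, weaker than your diagonal claim but fully sufficient: the error is then $\lesssim p^{1/3-8\delta_1-c\varepsilon}=p^{-\delta}$ times the main term. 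With that substitution your argument closes; as written, its decisive step is missing.
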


\bigskip

Theorem~\ref{thm:10 sums} provides with the asymptotic formula for the number of solutions of the congruence. But if one is interested
only on the question of solubility, then our intermediate result that we obtain in the course of the proof of Theorem~\ref{thm:Main}
combined with the result of Glibichuk~\cite{Glib} and Roche-Newton, Rudnev and Shkredov~\cite{RRShk}, leads to the following results.

\begin{theorem} 
\label{thm:8 sums} Let $\varepsilon>0$ be a fixed small positive constant and let 
$$
N>p^{1/3+\varepsilon},\quad M>p^{1/3+\varepsilon}.
$$ 
Then any integer $\lambda$ modulo $p$ can be represented  in the form
$$
x_1g^{y_1}+\ldots +x_{8}g^{y_{8}}\equiv \lambda \!\!\! \pmod  p,
$$
for some $x_i\in\cN$ and $y_i\in\cM$.
\end{theorem}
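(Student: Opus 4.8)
The plan is to recast the statement as a covering problem: setting $\cG=\{g^m : m\in\cM\}$ and $A=\cN\cdot\cG=\{xg^{y}\bmod p : x\in\cN,\ y\in\cM\}$, the asserted representability of every $\lambda$ is exactly the statement that the eight-fold sumset satisfies $8A=\F_p$. First I would record that $A$ is essentially as large as it can be: a collision $n_1g^{m_1}\equiv n_2 g^{m_2}$ forces $n_1/n_2\equiv g^{m_2-m_1}$, and since $\cN$ is an interval and $\cG$ a geometric progression the number of such coincidences is controlled, giving $|A|\gtrsim NM>p^{2/3+2\varepsilon}$ under the hypotheses $N,M>p^{1/3+\varepsilon}$.

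The heart of the argument is the intermediate estimate extracted from the proof of Theorem~\ref{thm:Main}, which I would use as an upper bound for the additive energy
$$
E^{+}(A)=\#\{x_1g^{y_1}+x_2g^{y_2}=x_3g^{y_3}+x_4g^{y_4}:\ x_i\in\cN,\ y_i\in\cM\}.
$$
Fed through the sum--product machinery of Roche-Newton, Rudnev and Shkredov~\cite{RRShk}, this should beat the trivial bound $E^{+}(A)\asymp|A|^{3}$ by a fixed power of $p$ throughout the range $N,M>p^{1/3+\varepsilon}$. Since $|A+A|\ge|A|^{4}/E^{+}(A)$ by Cauchy--Schwarz, the power saving in the energy translates into genuine growth of the successive sumsets of $A$; the point is that $A$, being a product set, is far from additively structured, so its sumsets expand much faster than a generic set of the same size.

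To convert this growth into a covering that costs only \emph{eight} summands — each of the exact form $x_ig^{y_i}$, so that the shape of the congruence is preserved — I would invoke Glibichuk's lemma~\cite{Glib}. For a product set $\cU\cdot\cV$ its covering threshold is normally $|\cU|\,|\cV|>p$, at which point $8(\cU\cdot\cV)=\F_p$; the role of the intermediate energy bound together with~\cite{RRShk} is precisely to lower this threshold for our special $A=\cN\cdot\cG$ down to $NM>p^{2/3+2\varepsilon}$, which the hypotheses supply. Heuristically, once the relevant sumset exceeds density $1/2$ one further addition closes it to all of $\F_p$ by Cauchy--Davenport, so the whole burden lies in reaching that density within the budget Glibichuk's lemma permits.

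The main obstacle I anticipate is purely quantitative: the covering must terminate at exactly eight summands, so the growth harvested from $E^{+}(A)$ and~\cite{RRShk} has to carry a set of size $\approx p^{2/3}$ all the way to positive density without overspending. Verifying that the power saving in the energy survives the passage through the sum--product step uniformly for every $N,M$ above $p^{1/3+\varepsilon}$, and that neither the collisions in $A$ nor the boundary regime $M$ close to $T$ erode the gain, is where the real work concentrates; the final covering is then a direct application of~\cite{Glib}.
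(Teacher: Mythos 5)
Your framing of the problem as $8A=\F_p$ for $A=\{xg^y \bmod p:\ x\in\cN,\ y\in\cM\}$ is correct, and you have correctly identified the two ingredients the paper uses: the cardinality lower bound for $A$ coming from the intermediate result (Corollary~\ref{cor1:CG+G}, which is a \emph{multiplicative} energy bound, not an additive one --- your identification of the intermediate estimate as a bound on $E^{+}(A)$ is already a misreading), and Glibichuk's lemma. But the central mechanism of your argument is missing. Glibichuk's lemma (Lemma~\ref{lem:Glib}) has a rigid hypothesis: it needs a \emph{product} structure $\cX\cdot\cY$ with $|\cX||\cY|>2p$. Applied to the obvious factorization $A=\cN\cdot\cG$ this fails badly, since $NM\approx p^{2/3}\ll p$. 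Your proposed remedy --- that an additive-energy bound for $A$ fed through the sum--product results of~\cite{RRShk} ``lowers the threshold'' of Glibichuk's lemma to $NM>p^{2/3+2\varepsilon}$ --- is not an argument: no such lowered-threshold version of the lemma exists or is derived, and a single energy bound $E^{+}(A)\lesssim |A|^3p^{-\delta}$ only controls the first doubling $|A+A|\ge |A|^4/E^{+}(A)$; it does not iterate to the subsequent doublings, so it cannot carry a set of size $p^{2/3}$ to density $1/2$ within the three doublings that a budget of eight summands allows. This is precisely the ``purely quantitative obstacle'' you flag at the end, and it is not a technicality --- it is the whole proof, and the route you sketch does not close it.

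The paper closes it with a short structural trick that your proposal does not contain: reuse the exponential structure to factor a \emph{subset} of $A$ as a product of sets whose cardinalities multiply to more than $p$. Let $M_1=\lfloor M/2\rfloor$, $\cM_1=\{v+1,\ldots,v+M_1\}$, and set
$$
\cX=\{xg^y \bmod p:\ x\in\cN,\ y\in\cM_1\},\qquad \cY=\{g^y \bmod p:\ 1\le y\le M_1\}.
$$
The point is that $\cX\cdot\cY\subset A$: a product $xg^{y_1}\cdot g^{y_2}$ equals $xg^{y_1+y_2}$ with $y_1+y_2\in\cM$, because the exponents were halved. Now Corollary~\ref{cor1:CG+G} gives $|\cX|\gtrsim p^{2/3}$ (under $N,M>p^{1/3+\varepsilon}$ the minimum in that corollary exceeds $p^{2/3}$ by a positive power of $p$), while $|\cY|=M_1>\tfrac12 p^{1/3+\varepsilon}$ since $g$ has order $T\ge M$. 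Hence $|\cX||\cY|>2p$, Lemma~\ref{lem:Glib} applies \emph{directly} to the pair $(\cX,\cY)$, and $8(\cX\cdot\cY)=\F_p$; since every element of $\cX\cdot\cY$ is of the form $xg^y$ with $x\in\cN$, $y\in\cM$, each of the eight summands has the required shape. In effect the set $A$ is exploited as a triple product $\cN\cdot g^{\cM_1}\cdot\{g,\ldots,g^{M_1}\}$ of total mass $\approx p^{1/3}\cdot p^{1/3}\cdot p^{1/3}=p$, which is what makes the fixed count of eight summands attainable; no additive energy of $A$ and no sum--product iteration is needed (the additive-energy input from~\cite{RRShk} enters only in Theorems~\ref{thm:Main} and~\ref{thm:16 sums}, not here). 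A further minor inaccuracy in your write-up: Corollary~\ref{cor1:CG+G} does not give $|A|\gtrsim NM$ in general, only $|A|\gtrsim\min\{N^2,NM,p,p^{1/4}M^{1/4}N\}$, and the last term can be the smallest; it is still larger than $p^{2/3}$ in the stated range, which is all that is used.
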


\begin{theorem} 
\label{thm:16 sums} Let $\varepsilon>0$ be a fixed small positive constant and let 
$$
N>p^{2/7+\varepsilon},\quad M>p^{2/7+\varepsilon}.
$$ 
Then any integer $\lambda$ modulo $p$ can be written in the form
$$
x_1g^{y_1}+\ldots +x_{16}g^{y_{16}}\equiv \lambda \!\!\! \pmod  p,
$$
for some $x_i\in\cN$ and $y_i\in\cM$.
\end{theorem}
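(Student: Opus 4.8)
The plan is to establish the covering statement by a sum-product argument, not by inserting the exponential sum bound of Theorem~\ref{thm:Main} directly. Write $G=\{g^m:m\in\cM\}$ and $A=\cN\cdot G \pmod p$; the assertion is exactly that the sixteen-fold sumset satisfies $16A=\F_p$. The point of allowing sixteen summands rather than the eight of Theorem~\ref{thm:8 sums} is that the extra summands let us first absorb a genuine sum-product gain into the set before invoking a covering theorem, and this is what relaxes the size requirement from $p^{1/3}$ to $p^{2/7}$.

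First I would regroup the sixteen terms into eight blocks of two and factor each block multiplicatively:
$$x_{2i-1}g^{y_{2i-1}}+x_{2i}g^{y_{2i}}=g^{y_{2i-1}}\bigl(x_{2i-1}+x_{2i}\,g^{y_{2i}-y_{2i-1}}\bigr)=\eta_i\xi_i,$$
with $\eta_i=g^{y_{2i-1}}\in G$ and $\xi_i\in X:=\cN+\cN\cdot(G/G)$, where $G/G=\{g^{m-m'}:m,m'\in\cM\}$. Thus $16A$ contains every sum $\sum_{i=1}^{8}\eta_i\xi_i$ with $\eta_i\in G$ and $\xi_i\in X$, and it suffices to show these cover $\F_p$. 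By the covering theorem of Glibichuk~\cite{Glib}, a bounded number of such products (here exactly the eight supplied by the eight blocks) equals $\F_p$ as soon as $|X|\cdot|G|>p$. Since $|G|\ge M$, the whole problem reduces to the single lower bound $|X|\gtrsim p/M$.

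The crux is therefore the sum-product estimate for $X=\cN+\cN\cdot(G/G)$. Here I would first use the intermediate result from the proof of Theorem~\ref{thm:Main} — the bound on the number of solutions of $n_1g^{m_1}+n_2g^{m_2}\equiv n_3g^{m_3}+n_4g^{m_4}\pmod p$ — to guarantee that the product set $\cN\cdot(G/G)$ has nearly maximal size $\gtrsim NM$ (few multiplicative collisions). Then the sum-product/incidence bound of Roche-Newton, Rudnev and Shkredov~\cite{RRShk} should upgrade this via Cauchy-Schwarz to $|X|\gtrsim(NM)^{5/4}$, up to $p^{o(1)}$ factors and the truncation at $p$. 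The requirement $(NM)^{5/4}\gtrsim p/M$, that is $N^{5/4}M^{9/4}\gtrsim p$, holds precisely when $N,M>p^{2/7+\e}$, which is exactly the source of the exponent $2/7$: at $N=M=p^{2/7}$ the exponents satisfy $(5/4+9/4)\cdot\tfrac{2}{7}=1$, so the threshold $|X|>p/M$ is crossed with room to spare above that value.

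I expect the main obstacle to be twofold. The first is obtaining the sum-product lower bound on $X$ with a strong enough exponent to reach $p/M$; this is where the RRS estimate must be deployed carefully, since $X$ is a sum of an interval with a dilated geometric progression rather than a generic set. The second is the coupling in the factorisation above: the ratio $g^{y_{2i}-y_{2i-1}}$ feeding $\xi_i$ shares the exponent $y_{2i-1}$ with $\eta_i$, so $\eta_i$ and $\xi_i$ are not a priori independent. I would handle this by restricting the ratios $y_{2i}-y_{2i-1}$ to a fixed sub-progression of $\cM-\cM$, losing only a constant factor in all the sizes, so that $\eta_i$ and $\xi_i$ can be chosen freely and Glibichuk's theorem applies to the pair $(X,G)$ in the form stated.
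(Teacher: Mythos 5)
Your overall skeleton --- group the sixteen terms into eight pairs, write each pair as a product of elements from two sets, decouple the shared exponent by passing to half-length intervals, and finish with Glibichuk's covering lemma applied to eight products --- is exactly the skeleton of the paper's proof, and your decoupling fix is the same device the paper uses (it works with $\cM_1=\{v+1,\dots,v+\lfloor 0.5M\rfloor\}$). The proof therefore stands or falls with the single cardinality bound you need, namely $|X|\gtrsim p/M$ for $X=\cN+\cN\cdot g^{\cI}$, with $\cI$ an interval of length about $M/2$; at the critical point $N=M=p^{2/7}$ this is your claim $|X|\gtrsim (NM)^{5/4}$. That bound is not proved in your proposal, and it does not follow from the tools you cite. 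The RRS energy estimate (Lemma~\ref{lem:RRSh}) applies to sets of small \emph{multiplicative} doubling; but $S=\cN\cdot g^{\cI}$ has $|S\cdot S|=|(\cN\cdot\cN)\cdot g^{\cI+\cI}|\approx N^2M\approx N|S|$, i.e.\ multiplicative doubling of order $N$, so the RRS machinery applied to $S$ (or to $X$) yields sumset growth only of order $M^{3/2}$, far below $(NM)^{5/4}$. The asymmetric RRS bound of the shape $|AB+C|\gg\min\{p,(|A||B||C|)^{1/2}\}$, used in its strongest admissible configuration ($A=\cN\cdot g^{\cI'}$, $B=g^{\cI''}$, $C=\cN$ with $\cI'+\cI''\subset\cI$), gives only $|X|\gtrsim NM\approx p^{4/7}$, whence $|X||G|\gtrsim p^{6/7}$, short of the needed $2p$. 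Finally, the ``few multiplicative collisions'' information from the intermediate result (Lemma~\ref{lem1:CG+G} and Corollary~\ref{cor1:CG+G}) controls $|\cN\cdot(G/G)|$ from below but says nothing about the additive interaction of that set with $\cN$: to run Cauchy--Schwarz you would need a mixed-energy bound of the form $E_{+}\bigl(\cN,\cN\cdot(G/G)\bigr)\lesssim N^{2}(NM)^{2}M/p$, which at $N=M=p^{2/7}$ demands a saving of a fixed power $p^{1/7}$ over the trivial bound; completing the sum and inserting Lemma~\ref{lem1:CG+G} (or Theorem~\ref{thm:Main}, which is barely nontrivial in this range) loses exactly that power. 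So the exponent $5/4$ is reverse-engineered from the target $p/M$ rather than derived; you yourself flag this as the ``main obstacle,'' and it is in fact the whole theorem.

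The paper distributes the roles between the two Glibichuk factors differently, and this is the idea you are missing. Write each pair with the \emph{same} $x$, namely $(xg^{y})(g^{y_1}+g^{y_2})=xg^{y+y_1}+xg^{y+y_2}$ --- nothing requires the two terms in a pair to carry independent $x$'s --- and apply Glibichuk to $\cX=\{xg^{y}\colon x\in\cN,\ y\in\cM_1\}$ and $\cY=\{g^{y_1}+g^{y_2}\colon 1\le y_1,y_2\le M_1\}$. Now both factors are controlled by results actually available: $|\cX|\gtrsim\min\{N^2,NM,p,p^{1/4}M^{1/4}N\}\ge p^{4/7+\e}$ by Corollary~\ref{cor1:CG+G}, and $|\cY|\gg M_1^4/E_{+}(g^{\cM_1})\gg M^{3/2}\ge p^{3/7}$ by Lemma~\ref{lem:RRSh} plus Cauchy--Schwarz, the energy bound being legitimate here because the pure geometric progression $g^{\cM_1}$ does have multiplicative doubling at most $2$. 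Then $|\cX||\cY|\gg NM^{5/2}>2p$ once $N,M>p^{2/7+\e}$, and Glibichuk finishes the proof exactly as in your framework. In short: the additive doubling must be placed on the geometric progression, where RRS applies, while the interval stays inside the product set, where the multiplicative-energy result applies; your arrangement puts the additive doubling on the interval side, where no adequate estimate exists.
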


\section{Lemmas}

The following lemma is contained in~\cite{CilGar} under the additional restriction $|\cU|<p^{2/5}$. This restriction has been removed
in~\cite{Gar1}. Note that when $\cU$ is a subgroup of size $|\cU|>p^{1/2}$,
the statement follows from~\cite[Theorem 1]{BKSh}.
\begin{lemma}
\label{lem:CG+G}
Let $H$ be a positive integer  and let $\cU\subset
\F_p^*$ be such that
$$
|\cU\cdot\cU|< 10|\cU|.
$$
Then the number $J_0$ of solutions of the congruence
$$
xr\equiv x_1 r_1\!\!\! \pmod  p;\quad x,x_1\in \mathbb{N},\quad x,x_1\le H,\quad r,r_1\in\cU
$$
satisfies
$$
J_0\lesssim |\cU|H+\frac{|\cU|^2H^2}{p}+\frac{|\cU|^{7/4}H}{p^{1/4}}.
$$
\end{lemma}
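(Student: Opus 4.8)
The plan is to read $J_0$ as a multiplicative energy and then to separate a transparent main term from an incidence-type correction. Write $\cI=\{1,\dots ,H\}$. Since $x,x_1\le H$ and $r,r_1\in\F_p^*$, the congruence $xr\equiv x_1r_1$ is equivalent to $x/x_1\equiv r_1/r$, so
$$
J_0=E_\times(\cI,\cU):=\#\{(x,x_1,r,r_1)\in\cI^2\times\cU^2:\ xr\equiv x_1r_1\ (\mathrm{mod}\ p)\}=\sum_{\lambda}R_\cI(\lambda)R_\cU(\lambda),
$$
where $R_\cI(\lambda)=\#\{(x,x_1)\in\cI^2:\ x\equiv\lambda x_1\}$ and $R_\cU(\lambda)=\#\{(r,r_1)\in\cU^2:\ r_1\equiv\lambda r\}$. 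The hypothesis $|\cU\cdot\cU|<10|\cU|$ enters through Pl\"unnecke--Ruzsa: it forces the quotient set $\cU/\cU$ to have size $\lesssim|\cU|$ and the energy $E_\times(\cU)=\sum_\lambda R_\cU(\lambda)^2$ to be $\asymp|\cU|^3$, so that $\cU$ behaves like an approximate multiplicative subgroup and the weights $R_\cU(\lambda)$ are supported on a set of size $\lesssim|\cU|$. First I would peel off the diagonal $\lambda=1$, i.e.\ $x=x_1$, $r=r_1$, which contributes exactly $H|\cU|$ and accounts for the first term.

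For the off-diagonal it is clarifying to pass to the geometry of numbers. For fixed $(r,r_1)$ the inner count is the number of points of the lattice $\Lambda=\{(x,x_1)\in\Z^2:\ xr\equiv x_1r_1\ (\mathrm{mod}\ p)\}$, of covolume $p$, lying in the box $[1,H]^2$; this is $\ll H^2/p+H/\lambda_1(\Lambda)$, where $\lambda_1$ is the length of the shortest nonzero vector. Summing the term $H^2/p$ over all $|\cU|^2$ pairs produces the ``random'' main term $|\cU|^2H^2/p$, which is the second term of the bound. Note that $\Lambda$ depends on $(r,r_1)$ only through $\lambda=r_1/r$, and that $\lambda_1(\Lambda)\le t$ signals the existence of a short relation $ar\equiv br_1$ with $1\le|a|,|b|\le t$.

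It remains to bound the short-vector contribution $H\sum_{(r,r_1)}\lambda_1(r_1/r)^{-1}$. A dyadic decomposition in $\lambda_1\sim 2^k$ reduces this to estimating, for each scale $t\le H$, the number of quadruples $(a,b,r,r_1)$ with $ar\equiv br_1\ (\mathrm{mod}\ p)$, $1\le|a|,|b|\le t$, $r,r_1\in\cU$ --- which is again a multiplicative energy, now between the short interval $[-t,t]$ and $\cU$. This is the genuine heart of the argument, and here I would invoke a point--plane incidence estimate of Rudnev type, in the form exploited by Roche-Newton, Rudnev and Shkredov~\cite{RRShk}: the small-doubling structure of $\cU$ lets one present these quadruples as incidences between a controlled point set and a family of planes, whose incidence bound contributes the diagonal $t|\cU|$, the expected $t^2|\cU|^2/p$, and crucially a saving term that, after summing the dyadic scales, yields $|\cU|^{7/4}H/p^{1/4}$.

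The main obstacle is that the two energies cannot be decoupled cheaply: a direct Cauchy--Schwarz splitting $J_0\le E_\times(\cI)^{1/2}E_\times(\cU)^{1/2}$ uses $E_\times(\cU)\asymp|\cU|^3$ and overshoots the main term by a factor $|\cU|^{1/2}$, giving the useless $|\cU|^{3/2}H$. One must therefore use the incidence geometry and the small-doubling hypothesis jointly rather than through energies alone; in particular the number of collinear points (equivalently, the multiplicity of the heavy slopes $\lambda$) has to be controlled using the interval structure of $\cI$ together with $|\cU\cdot\cU|<10|\cU|$, and this is what produces the correct exponent $7/4$ and the $p^{1/4}$-saving. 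Finally I would track the range hypotheses needed for the incidence theorem (the relevant point set must have size $\ll p^2$, and the heavy-slope count must stay below $p$), which is the mechanism by which restrictions such as $M<p^{2/3}$ surface once the lemma is applied.
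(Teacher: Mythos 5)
You should first note that the paper never proves Lemma~\ref{lem:CG+G}: it is quoted from \cite{CilGar} (where it is established under the extra restriction $|\cU|<p^{2/5}$), with that restriction removed in the preprint \cite{Gar1}, and with the subgroup case $|\cU|>p^{1/2}$ attributed to \cite[Theorem 1]{BKSh}. So your attempt must stand on its own, and as a self-contained proof it has a genuine gap. The parts you actually carry out are correct but routine: the energy formulation $J_0=\sum_\lambda R_\cI(\lambda)R_\cU(\lambda)$, the consequences $|\cU/\cU|\lesssim|\cU|$ (Ruzsa) and $E_\times(\cU)\asymp|\cU|^3$ (Cauchy--Schwarz plus small doubling), the diagonal contribution $H|\cU|$, the lattice bound $R_\cI(\lambda)\ll H^2/p+H/\lambda_1(\Lambda_\lambda)$ (valid because any solution is itself a nonzero lattice vector of length at most $\sqrt2\,H$, which absorbs the usual $+1$), and the dyadic passage to counting solutions of $ar\equiv br_1\pmod p$ with $1\le|a|,|b|\le t$. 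All of this yields only the first two terms of the bound together with a remainder $H\sum_{t\ \mathrm{dyadic}}t^{-1}E_t$, where $E_t=\#\{(a,b,r,r_1):\,ar\equiv br_1,\ 1\le|a|,|b|\le t,\ r,r_1\in\cU\}$. The entire content of the lemma --- the term $|\cU|^{7/4}H/p^{1/4}$ --- now rests on the single unproved assertion that $E_t\lesssim t|\cU|+t^2|\cU|^2/p+t|\cU|^{7/4}/p^{1/4}$, which you attribute to ``a point--plane incidence estimate of Rudnev type'' without exhibiting a point set or plane set, verifying the hypotheses of such a theorem, or performing the computation.

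This omission is the crux, for three reasons. First, the needed estimate for $E_t$ is literally the lemma itself with $H$ replaced by $t$: your reduction is circular, and the dyadic scheme cannot close without a genuinely different input at the bottom scale. Second, the tools you have actually put on the table provably cannot supply that input: Cauchy--Schwarz against $E_\times(\cU)\le|\cU|^3$ gives only $E_t\lesssim t|\cU|^{3/2}$, hence $J_0\lesssim H|\cU|^{3/2}+H^2|\cU|^2/p$, which is weaker than the claimed bound for every $|\cU|<p$ --- as you yourself observe --- so small doubling used through energies alone cannot suffice. Third, nothing in your sketch indicates that Rudnev's incidence theorem delivers the missing bound: incidence arguments of that type produce savings of the shape $n^{3/2}$, e.g.\ $E_+(\cU)\ll|\cU|^{5/2}$ for $|\cU|<p^{2/3}$, which is exactly Lemma~\ref{lem:RRSh} of this paper, whereas the term $|\cU|^{7/4}/p^{1/4}$ carries a saving in $p$ of the kind produced by Stepanov-method arguments as in \cite{BKSh} and by the specific counting arguments of \cite{CilGar,Gar1}. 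In short, you correctly reduce the lemma to its hard kernel and then cite a tool that is neither shown by you, nor known, to resolve that kernel; completing the proof would require reproducing the actual arguments of \cite{CilGar} and \cite{Gar1}, or giving a genuinely new proof of the required bound for $E_t$.
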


\begin{lemma}
\label{lem1:CG+G}
The number $J$ of solutions of the congruence
\begin{equation}
\label{eqn:lem1CG+G}
xg^y\equiv x_1 g^{y_1}\!\!\! \pmod  p; \quad x,x_1\in \cN,\quad y, y_1\in\cM
\end{equation}
satisfies
$$
J\lesssim M^2+MN +\frac{M^2N^2}{p}+\frac{M^{7/4}N}{p^{1/4}}.
$$
\end{lemma}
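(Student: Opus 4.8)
The plan is to reduce Lemma~\ref{lem1:CG+G} to Lemma~\ref{lem:CG+G} by exhibiting a suitable multiplicatively-structured set $\cU$.

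Let me look at what we need. We count solutions of $xg^y \equiv x_1 g^{y_1} \pmod p$ with $x,x_1 \in \cN$ and $y,y_1 \in \cM$.

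The congruence $xg^y \equiv x_1 g^{y_1}$ can be rewritten as $x g^y \cdot (x_1 g^{y_1})^{-1} \equiv 1$, or equivalently $x \cdot g^{y-y_1} \equiv x_1 \pmod p$. So if I set $r = g^y$ (ranging over $r \in \cU := \{g^m : m \in \cM\}$), the congruence becomes $xr \equiv x_1 r_1 \pmod p$ with $x, x_1 \in \cN$ (integers of size $\le$ roughly $u+N$) and $r, r_1 \in \cU$.

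This is exactly the setup of Lemma~\ref{lem:CG+G}, provided I can verify the small-doubling condition $|\cU \cdot \cU| < 10|\cU|$ and set $H$ appropriately. The set $\cU = \{g^m : m \in \cM\}$ consists of consecutive powers of $g$. Its product set $\cU \cdot \cU = \{g^{m_1+m_2} : m_i \in \cM\}$ consists of powers $g^k$ with $k$ ranging over $\cM + \cM$, an interval of length $2M-1$. Since $g$ has order $T \ge M$, and assuming these powers are distinct, we get $|\cU| = M$ and $|\cU \cdot \cU| \le 2M - 1 < 10|\cU|$. So the small-doubling hypothesis is satisfied comfortably.

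So the plan is:

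First I would set $\cU = \{g^m : m \in \cM\}$, observe $|\cU| = M$ (the powers are distinct since $M \le T$), and check $|\cU\cdot\cU| \le 2M - 1 < 10M = 10|\cU|$.

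Next, I would rewrite the congruence. Since $\cN = \{u+1, \dots, u+N\}$, I'd shift if necessary so that the variables $x, x_1$ become positive integers bounded by some $H$. The natural choice is to note that each element of $\cN$ is a positive integer at most $u + N$; but we want $H$ comparable to $N$, not $u+N$.

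The hard part is the reduction of the interval $\cN$ (which may not start at $0$) to the range $x, x_1 \le H$ with $H \approx N$ required by Lemma~\ref{lem:CG+G}. Since Lemma~\ref{lem:CG+G} requires the integers to lie in $\{1, \dots, H\}$, I cannot directly use $H = u + N$ (which could be as large as $p$), as that would ruin the bound. The fix is to exploit translation invariance. Writing $x = u + i$, $x_1 = u + j$ with $1 \le i, j \le N$, the congruence $xr \equiv x_1 r_1$ becomes $(u+i)r \equiv (u+j)r_1$.

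Here is the main obstacle and the resolution. The shift $u$ does not factor out cleanly because it multiplies $r$ and $r_1$ differently. One standard device is to instead bound $J$ by the number of solutions of a related congruence where the interval is centered or replaced by a difference. Concretely, I expect to introduce new variables and use that $\cN$ has small doubling as an interval: the set $\cN$ itself satisfies $|\cN - \cN| \le 2N$. An alternative, cleaner route is to absorb the shift into the unknown by a Fourier/completion step, or simply to note that the count $J$ is bounded by the count with $\cN$ replaced by $\{0, 1, \dots, N\}$ up to a constant, via a covering argument on $u$.

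Given the structure of the target bound $J \lesssim M^2 + MN + M^2N^2/p + M^{7/4}N/p^{1/4}$, the term $M^2$ dominates the diagonal contribution (where $y = y_1$ forces $x = x_1$, giving $\le MN$ solutions but organized as $|\cU| \cdot (\text{collisions})$), and the remaining three terms match Lemma~\ref{lem:CG+G}'s bound $|\cU|H + |\cU|^2 H^2/p + |\cU|^{7/4}H/p^{1/4}$ exactly under the identification $|\cU| = M$, $H = N$. So once the reduction to $H \approx N$ is justified, the result follows by direct substitution, and the extra $M^2$ term accommodates the degenerate cases (such as $x \equiv 0$ or small-prime obstructions) that the shift might introduce.

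Thus the proof is essentially a dictionary translation: set $\cU = \{g^m\}$, verify small doubling, reduce the interval $\cN$ to a genuine initial segment of length $N$, and invoke Lemma~\ref{lem:CG+G} with $|\cU| = M$ and $H = N$.
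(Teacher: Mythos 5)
You have the right frame (reduce to Lemma~\ref{lem:CG+G} with $\cU=\{g^m \bmod p:\, m\in\cM\}$, $|\cU|=M$, $H\approx N$, and the small-doubling check $|\cU\cdot\cU|\le 2M-1<10|\cU|$ is correct), but the step you yourself flag as ``the hard part'' --- removing the shift $u$ so that the variables lie in $\{1,\dots,H\}$ with $H\approx N$ --- is exactly the step you never carry out, and the three substitutes you offer do not close it. In particular, the claim that $J$ is bounded ``up to a constant'' by the count with $\cN$ replaced by $\{0,1,\dots,N\}$ ``via a covering argument on $u$'' is false as stated: translating $\cN$ does not preserve the multiplicative structure of the congruence $xg^y\equiv x_1g^{y_1}\pmod p$, so no covering over $u$ relates the two counts; and the Fourier/completion suggestion is left entirely unspecified. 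This is a genuine gap, not a presentational one, since for $u$ as large as $p$ the naive choice $H=u+N$ destroys the bound, as you correctly note.

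The paper's resolution is a short but essential trick you are missing: fix the pair $(y,y_1)\in\cM^2$ and let $J(y,y_1)$ be the number of solutions in $(x,x_1)$. For any pair with $J(y,y_1)\neq 0$ pick one solution $(x_0,x_0')\in\cN^2$, so that every solution $(x,x_1)$ satisfies
$$
(x-x_0)g^{y}\equiv (x_1-x_0')g^{y_1}\!\!\!\pmod p,\qquad |x-x_0|\le N,\ |x_1-x_0'|\le N .
$$
This maps solutions injectively (for fixed $y,y_1$) into solutions of the same congruence with variables in $[-N,N]$, i.e.\ $J(y,y_1)\le J_0(y,y_1)$, and summing over $y,y_1$ reduces everything to a symmetric interval around $0$ of length $\approx N$ --- which is the reduction you needed. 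After this, the case $x=0$ (which forces $x_1=0$) contributes exactly the $M^2$ term --- not ``small-prime obstructions'' as you guessed; your attribution of $M^2$ to the diagonal $y=y_1$ is also off, since that diagonal is covered by the $MN$ term --- and the case $x\neq 0$ is handled by a Cauchy--Schwarz symmetrization over the sign classes of $(x,x_1)$, after which Lemma~\ref{lem:CG+G} applies with $H=N$ and gives the remaining three terms. So your dictionary $|\cU|=M$, $H=N$ is the right one, but without the subtract-a-fixed-solution step the translation cannot be justified.
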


\begin{proof}
Given $y,y_1\in\cM$, denote by $J(y,y_1)$ the number of solutions of ~\eqref{eqn:lem1CG+G} in variables $x,x_1\in\cN$. Then
$$
J=\sum_{y,y_1}J(y,y_1).
$$
We can restrict the summation to those $y,y_1\in\cM$ for which $J(y,y_1)\not=0$. Thus, there is a pair $x_0, x_{0}'\in \cN$ depending on $y,y_1$
such that 
$$
(x-x_0)g^y\equiv (x_1-x_{0}') g^{y_1}\!\!\! \pmod  p.
$$ 
Note that $|x-x_0|\le N,\, |x_1-x_{0}'|\le N$. Hence,
$$
J(y,y_1)\le J_0(y,y_1),
$$
where $J_0(y,y_1)$ is the number of solutions of the congruence
$$
xg^y\equiv x_1 g^{y_1}\!\!\! \pmod  p,\quad |x|, |x_1|\le N. 
$$
Thus,
$$
J\le \sum_{y,y_1}J_0(y,y_1)=J_0',
$$
where $J_0'$ is the number of solutions of the congruence
$$
xg^y\equiv x_1 g^{y_1}\!\!\! \pmod  p,\quad |x|, |x_1|\le N, \quad y,y_1\in \cM. 
$$
If $x=0$, then $x_1=0$ and this case contributes to $J_0'$ the quantity $M^2$. If $x\not=0$, then $x_1\not=0$. In this case we 
apply the Cauchy-Schwarz inequality
to symmetrize the equation, and then the desired result follows from application of Lemma~\ref{lem:CG+G} to the set 
$U=\{g^{y}\!\!\! \pmod  p, \,y\in\cM\}.$

\end{proof}

\begin{corollary}
\label{cor1:CG+G}
The following bound holds:
$$
\Bigl|\{xg^y\!\!\! \pmod  p,  \, x\in \cN,\, y\in\cM\}\Bigr|\, \gtrsim \,\min\{N^2, NM, p, p^{1/4}M^{1/4}N\}.
$$
\end{corollary}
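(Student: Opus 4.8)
The plan is to derive Corollary~\ref{cor1:CG+G} from Lemma~\ref{lem1:CG+G} by a standard Cauchy--Schwarz lower-bound argument on the image set. Let me sketch how I would proceed.

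Write $\cR=\{xg^y \!\!\!\pmod p : x\in\cN,\, y\in\cM\}$, and for each residue $r$ let $\nu(r)$ be the number of representations $r\equiv xg^y$ with $x\in\cN$, $y\in\cM$. Then $\sum_r \nu(r)=NM$, while the number $J$ of solutions of~\eqref{eqn:lem1CG+G} is exactly $J=\sum_r \nu(r)^2$, since a solution is a pair of representations of a common value. By the Cauchy--Schwarz inequality,
\begin{equation*}
(NM)^2=\Bigl(\sum_{r\in\cR}\nu(r)\Bigr)^2\le |\cR|\sum_{r\in\cR}\nu(r)^2=|\cR|\,J,
\end{equation*}
so that $|\cR|\ge (NM)^2/J$. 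Thus the whole task reduces to plugging in the upper bound for $J$ from Lemma~\ref{lem1:CG+G} and checking that $(NM)^2/J\gtrsim\min\{N^2,NM,p,p^{1/4}M^{1/4}N\}$.

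The mechanical core is then to show that
\begin{equation*}
\frac{(NM)^2}{M^2+MN+M^2N^2/p+M^{7/4}N/p^{1/4}}\;\gtrsim\;\min\{N^2,\,NM,\,p,\,p^{1/4}M^{1/4}N\}.
\end{equation*}
Since the denominator is a sum of four terms, up to the $p^{o(1)}$ factor it is comparable to the largest of them, and hence $(NM)^2/J$ is comparable to the minimum over the four corresponding quotients. I would verify that these four quotients are precisely $N^2$ (from the $M^2$ term), $NM$ (from the $MN$ term), $p$ (from the $M^2N^2/p$ term), and $p^{1/4}M^{1/4}N$ (from the $M^{7/4}N/p^{1/4}$ term, since $(NM)^2/(M^{7/4}N/p^{1/4})=p^{1/4}M^{1/4}N$). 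This matching is exactly the content of the four-term minimum in the statement, so no case analysis beyond this direct computation is needed.

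The only genuinely delicate point is the passage $|\cR|\ge (NM)^2/J$, which requires the identity $J=\sum_r\nu(r)^2$; one must be careful that $J$ counts \emph{all} solutions including the diagonal, so that $\nu(r)^2$ (and not some shifted count) is the right quantity, and that $J>0$ so the division is legitimate. Given how directly everything follows, I expect no real obstacle: once the Cauchy--Schwarz step is in place, the result is an immediate consequence of Lemma~\ref{lem1:CG+G}. A minor cosmetic care is that the $\gtrsim$ notation absorbs the $p^{o(1)}$ losses coming from the $\lesssim$ in Lemma~\ref{lem1:CG+G}, so the inequality should be stated with $\gtrsim$ exactly as in the corollary.
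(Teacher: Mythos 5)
Your proposal is correct and is precisely the intended argument: the paper gives no separate proof of Corollary~\ref{cor1:CG+G} because it follows from Lemma~\ref{lem1:CG+G} by exactly this standard Cauchy--Schwarz (second-moment) step, $|\cR|\ge (NM)^2/J$ with $J=\sum_r\nu(r)^2$, followed by the four-quotient computation you carried out. Nothing is missing; the identification of the four quotients with $N^2$, $NM$, $p$, and $p^{1/4}M^{1/4}N$ is accurate, and the $\gtrsim$ correctly absorbs both the constant from comparing the sum to its largest term and the $p^{o(1)}$ loss from the lemma.
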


We need the following result of Roche-Newton,  Rudnev and Shkredov~\cite{RRShk}.

\begin{lemma}
\label{lem:RRSh}
Let $M<p^{2/3}$. Then the number $E_{+}(g^{\cM})$ of solutions of the congruence
$$
g^{m_1}+g^{m_2}\equiv g^{m_3}+g^{m_4}\!\!\! \pmod  p,\quad m_1,m_2,m_3,m_4\in \cM,
$$
satisfies $E_{+}(g^{\cM})\ll M^{5/2}$.
\end{lemma}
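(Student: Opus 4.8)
The starting point is that $g^{\cM}$ is a geometric progression in $\F_p^{*}$, since $\cM$ is an interval of consecutive integers; consequently it has essentially no multiplicative growth, $|g^{\cM}\cdot g^{\cM}|\le 2M-1$. The assertion is therefore an instance of the sum--product phenomenon: a set with tiny multiplicative doubling must have small additive energy. As $E_{+}$ is invariant under dilation by the nonzero constant $g^{v}$, I would normalise and work with $G=\{1,g,\ldots,g^{M-1}\}$, so that the task becomes $E_{+}(G)\ll M^{5/2}$.

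The first step is to reduce the four-variable energy to a three-variable mixed equation by exploiting the progression structure of the differences. Starting from $g^{m_1}-g^{m_3}=g^{m_4}-g^{m_2}$ and using the factorisation $g^{m_1}-g^{m_3}=g^{m_3}(g^{m_1-m_3}-1)$, one rewrites the defining relation as $g^{s}(g^{d_1}-1)=g^{d_2}-1$, where $s$ and the differences $d_1,d_2$ lie in intervals of length $O(M)$; for each admissible triple there are at most $M$ completions to a quadruple, so $E_{+}(G)\le M\cdot N$ with $N$ the number of solutions of $g^{s}(g^{d_1}-1)=g^{d_2}-1$ in a geometric progression of length $O(M)$. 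It then suffices to prove $N\ll M^{3/2}$.

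The bound on $N$ is the crux. Writing $x=g^{s}$, $y=g^{d_1}$, $z=g^{d_2}$, the equation reads $x(y-1)=z-1$, an Elekes--Szab\'o surface whose product term $xy$ makes it genuinely two-dimensional; here the multiplicative structure of $G$ is essential, since treating $G$ as unstructured would only produce incidences with a degenerate pencil and no saving. I would encode $N$ as a point--plane incidence count in $\F_p^{3}$, with the planes indexed by the variable $x$ and the points formed from the pairs $(y,z)$, and then apply Rudnev's point--plane incidence theorem. Its bound is the sum of a statistical term of order $M^{3}/p$ and a Szemer\'edi--Trotter-type term of order $M^{3/2}$; the hypothesis $M<p^{2/3}$ is exactly what guarantees $M^{3}/p\le M^{3/2}$, so that $N\ll M^{3/2}$ and hence $E_{+}(g^{\cM})\ll M^{5/2}$.

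The main obstacle is the correct design of the incidence configuration and the control of its degenerate part: Rudnev's theorem also carries a term $k\,|P|$, where $k$ bounds the number of planes through a common line, and a large $k$ would swamp the main term. One must show, using the smallness of $G\cdot G$, that the collinear contribution --- together with the degenerate solutions coming from $g^{d_1}=1$ or $z=1$ --- is at most of order $M^{3/2}$. Verifying that the genuine incidence term dominates, and that these degeneracies are negligible, is the delicate part of the argument; the remaining manipulations are routine bookkeeping.
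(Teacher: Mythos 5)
The paper itself does not prove this lemma: it is quoted as an external result from Roche-Newton, Rudnev and Shkredov~\cite{RRShk}, so your attempt has to be measured against their argument, which---as you correctly guessed---runs on Rudnev's point--plane incidence theorem. Your general philosophy (geometric progression $\Rightarrow$ tiny multiplicative doubling $\Rightarrow$ small additive energy) is the right one, and your first reduction is sound: factoring $g^{m_1}-g^{m_3}=g^{m_3}(g^{m_1-m_3}-1)$ does give $E_{+}(g^{\cM})\le M\,N$, where $N$ is the number of solutions of $g^{s}(g^{d_1}-1)=g^{d_2}-1$ with $s,d_1,d_2$ in intervals of length $O(M)$.

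The gap is in the crux: the claim $N\ll M^{3/2}$ via the configuration you describe fails quantitatively. You index the planes by $x=g^{s}$ (about $M$ planes) and build the points from the pairs $(y,z)=(g^{d_1},g^{d_2})$ (about $M^{2}$ points). Rudnev's theorem for families of sizes $|P|$ and $|\Pi|$ gives, besides the statistical term $|P||\Pi|/p$, the term $\min(|P|,|\Pi|)^{1/2}\max(|P|,|\Pi|)$ plus the collinearity term; for sizes $M$ and $M^{2}$ this is $M^{1/2}\cdot M^{2}=M^{5/2}$, not $M^{3/2}$. The $M^{3/2}$ you quote is the balanced bound $n^{3/2}$ with $n=M$, but your configuration is not (and cannot be) balanced at size $M$, since the pairs $(y,z)$ alone already number $M^{2}$. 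So your route gives at best $N\ll M^{5/2}$, whence $E_{+}\le MN\ll M^{7/2}$, which is worse than the trivial bound $M^{3}$. Even the natural repair---splitting $x=x_1x_2$ with $x_1,x_2$ in progressions of length about $M^{1/2}$, which produces a balanced family of roughly $M^{3/2}$ points and $M^{3/2}$ planes---only yields $N\ll M^{9/4}$. Indeed, $N\ll M^{3/2}$ is not something one application of Rudnev's theorem is known to give: for multiplicative subgroups that statement is a Heath-Brown--Konyagin/Stepanov-type result, and in general it is at least as deep as the lemma itself. The proof in~\cite{RRShk} avoids your intermediate quantity $N$ altogether by an amplification: writing $a_1=q_1c_1^{-1}$ and $a_3=q_3c_3^{-1}$ with $q_i\in A\cdot A$, $c_i\in A$ (here $A=g^{\cM}$, so $|A\cdot A|\le 2M-1$), one bounds $|A|^{2}E_{+}(A)$ by the number of solutions of $q_1c_1^{-1}-q_3c_3^{-1}=a_4-a_2$; this bilinear equation in six variables is encoded by a \emph{balanced} configuration of roughly $|A|^{2}|A\cdot A|\approx M^{3}$ points and $M^{3}$ planes, Rudnev's hypothesis $|P|\ll p^{2}$ becomes exactly $M\ll p^{2/3}$, and his bound gives $|A|^{2}E_{+}(A)\ll M^{6}/p+M^{9/2}+kM^{3}$, i.e.\ $E_{+}(A)\ll M^{4}/p+M^{5/2}+kM\ll M^{5/2}$. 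It is this rebalancing---paying a factor $|A|^{2}$ on the left to obtain an $M^{3}$-sized balanced incidence problem on the right---that your sketch is missing, and without it the approach collapses.
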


We also need  the following result of Glibichuk~\cite{Glib}.
\begin{lemma}
\label{lem:Glib}
Let $\cX, \cY \subset \F_p $ be such that  $|\cX||\cY|>2p$.  Then $8(\cX\cdot \cY)=\F_p$.
\end{lemma}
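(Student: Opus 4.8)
The plan is to reduce the statement to a lower bound on a bounded-fold sumset of the product set $P:=\cX\cdot\cY$ and then to finish with the degenerate (pigeonhole) case of the Cauchy--Davenport inequality. Writing $8(\cX\cdot\cY)=4P+4P$, a fixed $\lambda$ lies in $4P+4P$ as soon as $4P$ and $\lambda-4P$ meet; since both have cardinality $|4P|$, this is automatic once $2|4P|>p$. Thus it suffices to prove $|4(\cX\cdot\cY)|>p/2$, and the whole difficulty is concentrated in converting the density hypothesis $|\cX||\cY|>2p$ into a genuinely large additive set of products.

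For that lower bound I would exploit the bilinear structure by passing to multiplicative dilates. Fixing nonzero $b_1,b_2,b_3,b_4\in\cY$ and letting the $x_i$ range over $\cX$ gives $b_1\cX+b_2\cX+b_3\cX+b_4\cX\subseteq 4(\cX\cdot\cY)$, which after division by $b_1$ is the dilated sumset $\cX+\xi_2\cX+\xi_3\cX+\xi_4\cX$ with $\xi_i=b_i/b_1\in\cY/b_1$. The point is that the additive energy of this configuration, summed over all admissible triples $(\xi_2,\xi_3,\xi_4)\in(\cY/b_1)^3$, is controlled by a trivial diagonal count: in the equation $d_1+\xi_2d_2+\xi_3d_3+\xi_4d_4=0$, with $d_i\in\cX-\cX$, any non-degenerate choice of the $d_i$ determines one free dilate from the other two, so the total is $\lesssim|\cX|^8|\cY|^2+|\cX|^4|\cY|^3$. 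Averaging produces a single admissible triple whose energy is at most $|\cX|^4+|\cX|^8/|\cY|$, and the Cauchy--Schwarz inequality $|S|\ge(\sum_\lambda r(\lambda))^2/\sum_\lambda r(\lambda)^2$ then yields $|4(\cX\cdot\cY)|\gtrsim |\cX|^4|\cY|/(|\cX|^4+|\cY|)$; the symmetric computation gives the same bound with $\cX$ and $\cY$ interchanged.

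The hard part will be that these elementary estimates stall far short of $p$. In the extremal balanced case of the hypothesis --- for instance $\cX=\cY$ a geometric progression of length $\approx\sqrt{2p}$ --- the bounds above saturate at order $\sqrt p$, and feeding them through the remaining products via Cauchy--Davenport only increases the size additively, never reaching $p/2$. A direct Fourier count of the representation number $N(\lambda)=p^{-1}\sum_t S(t)^8 e_p(-t\lambda)$, with $S(t)=\sum_{x\in\cX,y\in\cY}e_p(txy)$, fails for the same structural reason: the bilinear sum $S(t)$ need not exhibit any cancellation when $\cX\cdot\cY$ is additively concentrated, so the main term $(|\cX||\cY|)^8/p$ is not dominant. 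Bridging this gap and reaching all of $\F_p$ with the clean constant $8$ requires keeping the full simultaneous freedom of all eight products rather than working inside a fixed sumset, together with a finer combinatorial control of the multiplicative interaction of $\cX$ and $\cY$; this is precisely the content of Glibichuk's argument in~\cite{Glib}, and is where essentially all of the work resides.
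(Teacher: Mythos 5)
You should first know that the paper itself contains no proof of this lemma: it is imported verbatim, with a citation, as a theorem of Glibichuk~\cite{Glib}, so the only argument to compare against is Glibichuk's own. Your opening reduction is correct as far as it goes: writing $P=\cX\cdot\cY$, one has $8P=4P+4P$, and every $\lambda$ lies in $4P+4P$ as soon as $2|4P|>p$, since $4P$ and $\lambda-4P$ must then intersect. But everything after that is, by your own admission, not a proof. The energy bound you derive, $|4P|\gtrsim |\cX|^4|\cY|/(|\cX|^4+|\cY|)$, collapses to about $\sqrt{p}$ exactly in the critical balanced regime $|\cX|\approx|\cY|\approx\sqrt{2p}$, and your closing paragraph concedes that ``essentially all of the work'' resides in Glibichuk's argument, which you do not reproduce. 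An accurate diagnosis of why the naive energy and Fourier approaches fail is useful, but it is not a substitute for the missing argument: the gap sits precisely where the theorem lives.

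The missing idea is also of a different nature than the one you were pursuing. Glibichuk's proof is elementary and combinatorial, not analytic: its engine is the identity $(a_1-a_2)(b_1-b_2)=a_1b_1-a_1b_2-a_2b_1+a_2b_2$, which realizes every product of differences as a sign-mixed sum of four products; this is combined with a pigeonhole (covering) argument, in which the hypothesis $|\cX||\cY|>2p$ enters directly, and a case analysis on the symmetry of the sets to arrange all eight signs to be positive. In particular, the known proof never lower-bounds $|4P|$ at all --- it keeps, as you yourself put it, the simultaneous freedom of all eight products --- so the one step of your proposal that is correct commits you to a route (forcing $|4P|>p/2$ first) that the actual argument deliberately avoids, and which your estimates show no sign of being able to complete.
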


\section{Proof of Theorem~\ref{thm:Main}}

Let $\gamma_n$ be complex numbers such that $|\gamma_n|=1$ and
$$
\left|\sum_{m\in  \cM}e_p(ang^m)\right|=\gamma_n \sum_{m\in  \cM}e_p(ang^m)
$$
 We have that
 $$
S_{a,p,g}(\cN,\cM)=\sum_{n\in \cN}\left|\sum_{m\in  \cM}e_p(ang^m)\right|= \sum_{n\in \cN}\sum_{m\in  \cM}\gamma_n e_p(ang^m).
 $$
We recall that $\cM=\{v+1,\ldots, v+M\}$. As in the work of Friedlander and Iwaniec~\cite{FrIw}, we introduce the function
$$
f(x)=\left\{
\begin{array}{lll}
0 &if & x\le v;\\
x-v &if & v \le x\le v+1;\\
1 & if& v+1\le x\le v+M;  \\
v+M+1-x &if & v+M\le x\le v+M+1;\\
0 &if & x\ge  v+M+1.
\end{array}\right.
$$
Let $F$ denote the Fourier transform of $f$,
$$
F(y)=\int_{-\infty}^{\infty}f(x)e^{-2\pi i yx}dx.
$$
Then integrating by part it follows that $|F(y)|\le \min \{M, |\pi y|^{-1}, |\pi y|^{-2}\}$. Hence,
\begin{equation}
\label{eqn:Fint}
\int_{-\infty}^{\infty}|F(y)|dy\le  \int_{0}^{1/M}Mdy+\int_{1/M}^{1}|\pi y|^{-1}dy+\int_{1}^{\infty}|\pi y|^{-2}dy\ll \log M.
\end{equation}
Therefore, since $f(x)=\int_{-\infty}^{\infty}F(y)e^{2\pi i yx}dy$, we get that
\begin{equation*}
\begin{split}
S_{a,p,g}(\cN,\cM)=&\sum_{n\in \cN}\sum_{m\in\cM}\gamma_n e_p(a n g^m)=\\&\sum_{n\in\cN}\sum_{m=-\infty}^{\infty}\gamma_n f(m)e_p(a n g^m)=\\
&\frac{1}{M}\sum_{k=1}^{M}\sum_{n\in\cN}\sum_{m=v-M}^{v+M-1}\gamma_n f(k+m)e_p(a n g^kg^m)=\\
&\frac{1}{M}\int_{-\infty}^{\infty}F(y)\Bigl(\sum_{k=1}^{M}\sum_{n\in\cN}\sum_{m=v-M}^{v+M-1}\gamma_n e^{2\pi i (k+m)y}e_p(a n g^kg^m) \Bigr)dy.
\end{split}
\end{equation*}
Hence, in view of~\eqref{eqn:Fint}, for some $y\in \R$ we have that
$$
S_{a,p,g}(\cN,\cM)\lesssim \frac{1}{M}\Bigl|\sum_{k=1}^{M}\sum_{n\in\cN}\sum_{m=v-M}^{v+M-1}\gamma_n \delta_k\delta_m
e_p(a n g^kg^m) \Bigr|,
$$
where $\delta_j=e^{2\pi i j y}$. Thus,
\begin{equation}
\label{eqn:SW2}
S_{a,p,g}(\cN,\cM)\lesssim \frac{W}{M},
\end{equation}
where 
$$
W=\sum_{k=1}^{M}\sum_{n\in \cN}\Bigl|\sum_{m=v-M}^{v+M-1}\delta_m
e_p(a n g^kg^m) \Bigr|.
$$
Applying the Cauchy-Schwarz inequality, we obtain that
\begin{equation*}
\begin{split}
W^2\le NM& \sum_{k=1}^{M}\sum_{n\in\cN}\Bigl|\sum_{m=v-M}^{v+M-1}\delta_m
e_p(a n g^kg^m) \Bigr|^2=\\ 
NM&\sum_{k=1}^{M}\sum_{n\in\cN}\sum_{m_1=v-M}^{v+M-1}\sum_{m_2=v-M}^{v+M-1}\delta_{m_1}\bar \delta_{m_2}
e_p(a n g^k(g^{m_1}-g^{m_2})) \le \\ NM&\sum_{m_1=v-M}^{v+M-1}\sum_{m_2=v-M}^{v+M-1}\Bigl|\sum_{k=1}^{M}\sum_{n\in\cN}
e_p(a n g^k(g^{m_1}-g^{m_2})) \Bigr|.
\end{split}
\end{equation*}
Thus, if we denote by $I_{\lambda}$ the number of solutions of the congruence
$$
g^{m_1}-g^{m_2}\equiv \lambda\!\!\! \pmod  p, \quad v-M\le m_1,m_2\le v+M-1,
$$
we get that
$$
W^2\le NM \sum_{\lambda=0}^{p-1}I_{\lambda}\Bigl|\sum_{k=1}^{M}\sum_{n\in\cN}
e_p(a \lambda n g^k) \Bigr|.
$$
Applying Cauchy-Schwarz inequality again, we get that
\begin{equation}
\label{eqn:W24}
W^4\le N^2M^2\Bigl( \sum_{\lambda=0}^{p-1}I_{\lambda}^2\Bigr) \sum_{\lambda=0}^{p-1}\Bigl|\sum_{k=1}^{M}\sum_{n\in\cN}
e_p(a \lambda n g^k) \Bigr|^2
\end{equation}
The quantity $\sum_{\lambda=0}^{p-1}I_{\lambda}^2$ is equal to the number of solutions of the congruence
$$
g^{m_1}-g^{m_2}\equiv g^{m_3}-g^{m_4}\!\!\! \pmod  p, \quad v-M\le m_1,m_2, m_3,m_4\le v+M-1.
$$
It then follows from Lemma~\ref{lem:RRSh} that
\begin{equation}
\label{eqn:M5/2}
\sum_{\lambda=0}^{p-1}I_{\lambda}^2\ll M^{5/2}.
\end{equation}
Furthermore,
$$
\sum_{\lambda=0}^{p-1}\Bigl|\sum_{k=1}^{M}\sum_{n\in\cN}
e_p(a \lambda n g^k) \Bigr|^2=pJ,
$$
where $J$ is the number of solutions of the congruence
$$
ng^k\equiv n_1g^{k_1}\!\!\! \pmod  p, \quad 1\le n,n_1\le N,\quad 1\le k,k_1\le M.
$$
Applying Lemma~\ref{lem1:CG+G}, we get that
$$
\sum_{\lambda=0}^{p-1}\Bigl|\sum_{k=1}^{M}\sum_{n\in\cN}
e_p(a \lambda n g^k) \Bigr|^2\lesssim p\Bigl(M^2+MN+\frac{M^2N^2}{p}+\frac{M^{7/4}N}{p^{1/4}}\Bigr).
$$
Inserting this  and~\eqref{eqn:M5/2} into~\eqref{eqn:W24}, we obtain that
$$
\frac{W^4}{M^4}\lesssim pN^2M^{5/2} + pN^3M^{3/2}+N^4M^{5/2}+ N^{3}M^{9/4}p^{3/4}.
$$
Thus,
$$
\frac{W}{M}\lesssim NM\Bigl(\frac{1}{M^{3/8}} +
 \Bigl(\frac{p}{NM^{5/2}}\Bigr)^{1/4} + \Bigl(\frac{p}{N^{4/3}M^{7/3}}\Bigr)^{3/16}+\Bigl(\frac{p}{N^{2}M^{3/2}}\Bigr)^{1/4}\Bigr).
$$
Substituting this in~\eqref{eqn:SW2}, we conclude the proof.

\section{Proofs of Theorems~\ref{thm:10 sums},~\ref{thm:8 sums} and~\ref{thm:16 sums}}

We start with the proof of Theorem~\ref{thm:10 sums}. First of all we note that if $M>p^{2/3}$, then from the classical bounds 
of exponential sums with exponential functions, we know that
$$
\max_{\gcd(a,p)=1}\Bigl|\sum_{m\in \cM}e_p(ag^m)\Bigr|\lesssim p^{1/2}.
$$
Hence, in this case we have that 
$$
\Bigl|\sum_{n\in \cN}\sum_{m\in \cM}e_p(ang^m)\Bigr|\lesssim M+Np^{1/2+o(1)}\lesssim NMp^{-1/6}.
$$
If $M<p^{2/3}$, then by Theorem~\ref{thm:Main}, we get that
\begin{equation}
\label{eqn:estanycase}
\Bigl|\sum_{n\in \cN}\sum_{m\in \cM}e_p(ang^m)\Bigr|<\frac{NM}{p^{1/24+\delta_0}},\quad \delta_0=\delta_0(\varepsilon)>0.
\end{equation}
Thus, the estimate~\eqref{eqn:estanycase} holds. Expressing $J_{10}$ in terms of exponential sums, we get
$$
J_{10}=\frac{1}{p}\sum_{a=0}^{p-1}\prod_{j=1}^{10}\Bigl(\sum_{x\in\cN_j}\sum_{y\in\cM_j}e_p(axg^y)\Bigr)e_p(-a\lambda).
$$
Separating the term that corresponds to $a=0$ and then using~\eqref{eqn:estanycase}, we obtain that
\begin{equation}
\label{eqn: J minus main term}
\Bigl|J-\frac{\prod_{j=1}^{10}(N_jM_j)}{p} \Bigr|< \frac{\prod_{j=1}^{8}(N_jM_j)}{p^{1/3+8\delta_0}}R
\end{equation}
where
$$
R=\frac{1}{p}\sum_{a=1}^{p-1}\Bigl|\sum_{x\in\cN_9}\sum_{y\in\cM_9}e_p(axg^y)\Bigr|\Bigl|\sum_{x\in\cN_{10}}\sum_{y\in\cM_{10}}e_p(axg^y)\Bigr|. 
$$
Applying the Cauchy-Schwarz inequality, we get that
$$
R\le \sqrt{\frac{1}{p}\sum_{a=0}^{p-1}\Bigl|\sum_{x\in\cN_9}\sum_{y\in\cM_9}e_p(axg^y)\Bigr|^2}\times \sqrt{\frac{1}{p}\sum_{a=0}^{p-1}\Bigl|\sum_{x\in\cN_{10}}\sum_{y\in\cM_{10}}e_p(axg^y)\Bigr|^2}.
$$
Furthermore,
$$
\frac{1}{p}\sum_{a=0}^{p-1}\Bigl|\sum_{x\in\cN_j}\sum_{y\in\cM_j}e_p(axg^y)\Bigr|^2 = T_j,
$$
where $T_j$ is the number of solutions of the congruence
$$
xg^y\equiv x_1g^{y_1}\!\!\! \pmod  p,\quad x,x_1\in \cN_j,\quad y,y_1\in\cM_j.
$$
Thus,
$$
R\le \sqrt{T_9 T_{10}}.
$$
From Lemma~\ref{lem1:CG+G} it follows that
$$
T_j\lesssim N_j^2M_j^2\Bigl(\frac{1}{N_j^2}+\frac{1}{N_jM_j}+\frac{1}{p}+\frac{1}{p^{1/4}M_j^{1/4}N_j}\Bigr)<\frac{N_j^2M_j^2}{p^{2/3+1.1\varepsilon}}.
$$
Hence,
$$
R\le \sqrt{T_9 T_{10}}\le \frac{N_9M_9N_{10}M_{10}}{p^{2/3+\varepsilon}}.
$$
Inserting this into~\eqref{eqn: J minus main term}, we get that
$$
\Bigl|J-\frac{\prod_{j=1}^{10}(N_jM_j)}{p} \Bigr|< \frac{\prod_{j=1}^{10}(N_jM_j)}{p}p^{-\delta},\quad \delta=\delta(\varepsilon)>0.
$$
This finishes the proof of Theorem~\ref{thm:10 sums}.

In order to prove Theorem~\ref{thm:8 sums}, let $M_1=\lfloor0.5M\rfloor$ and define the sets $\cX$ and $Y$ as follows:
$$
\cX=\{xg^y\!\!\! \pmod  p,\, x\in\cN, y\in \cM_1\},\quad \cY=\{g^y\!\!\! \pmod  p,\,  1\le y\le M_1\},
$$
where $\cM_1=\{v+1, v+2,\ldots, v+M_1 \}$. By Corollary~\ref{cor1:CG+G}, we have 
$$
|\cX||\cY|\ge p^{2/3}p^{1/3+0.5\varepsilon}>2p.
$$
Therefore, by Lemma~\ref{lem:Glib}, $8(\cX\cdot \cY)=\F_p$. Since
$$
\cX\cY\subset  \{xg^y\!\!\! \pmod  p,\, x\in\cN,  y\in \cM\}, 
$$
the claim follows.

The proof of Theorem~\ref{thm:16 sums} is similar. We can assume that $M<p^{2/3}$. Define $M_1, \cM_1$ and  $\cX$  as in the proof of Theorem~\ref{thm:8 sums},
and define $\cY$ by
$$
\cY=\{g^{y_1}+g^{y_2}\!\!\! \pmod  p,\,  1\le y_1,y_2\le M_1\}.
$$
From Corollary~\ref{lem1:CG+G} we have that $|\cX|\ge p^{4/7+\varepsilon}$. 
Also from Lemma~\ref{lem:RRSh} and the relationship between the number of solutions and the cardinality,
it follows that $|\cY|\gg M^{3/2}$. Hence,
$$
|\cX||\cY|\ge p^{4/7+\varepsilon} p^{3/7}>2p.
$$
Hence, by Lemma~\ref{lem:Glib}, $8(\cX\cdot \cY)=\F_p$. Since
$$
\cX\cY\subset  \{x_1g^{y_1}+x_2g^{y_2}\!\!\! \pmod  p,\, x_1,x_2\in\cN,  y_1,y_2\in \cM\}, 
$$
the result follows.

Address of the author:\\

M.~Z.~Garaev, Centro de Ciencias Matem\'{a}ticas,  Universidad
Nacional Aut\'onoma de M\'{e}xico, C.P. 58089, Morelia,
Michoac\'{a}n, M\'{e}xico,

Email: {\tt garaev@matmor.unam.mx}

\end{document}